\newtheorem{theorem}{Theorem}[section]
\newtheorem{lemma}[theorem]{Lemma}
\newtheorem{corollary}[theorem]{Corollary}
\newtheorem{example}[theorem]{Example}
\newtheorem{conjecture}[theorem]{Conjecture}
\newtheorem{remark}[theorem]{Remark}
\newtheorem{question}[theorem]{Question}
\newcommand{\chis}[1]{\chi_s'(#1)}
\newcommand{\inv}{^{-1}}
\newcommand{\id}{{\mathrm{id}}}
\begin{document}

\title{\textbf{Strong edge colorings of graphs and the covers of Kneser graphs}}

\author{	
	Borut Lu\v{z}ar\thanks{Faculty of Information Studies, Novo mesto, Slovenia.
		E-Mail: \texttt{borut.luzar@gmail.com}}, \
	Edita Ma\v{c}ajov\'{a}\thanks{
		Comenius University, Bratislava, Slovakia.
		E-Mails: \texttt{\{macajova,skoviera\}@dcs.fmph.uniba.sk}}, \
	Martin \v{S}koviera\footnotemark[2],\		
	Roman Sot\'{a}k\thanks{
		Pavol Jozef \v{S}af\'{a}rik University, Ko\v{s}ice, Slovakia.
		E-Mail: \texttt{roman.sotak@upjs.sk}},
}

\maketitle

{%
\abstract{A proper edge coloring of a graph is strong if it
creates no bichromatic path of length three. It is well known
that for a strong edge coloring of a $k$-regular graph at least
$2k-1$ colors are needed. We show that a $k$-regular graph
admits a strong edge coloring with $2k-1$ colors if and only if
it covers the Kneser graph $K(2k-1,k-1)$. In particular, a
cubic graph is strongly $5$-edge-colorable whenever it covers
the Petersen graph. One of the implications of this result is
that a conjecture about strong edge colorings of subcubic
graphs proposed by Faudree et al. [Ars Combin. 29 B (1990),
205--211] is false. }

\bigskip
{\noindent\small \textbf{Keywords:} strong edge coloring,
Petersen coloring, Kneser graph, odd graph, cubic graph,
covering projection }
}%

\section{Introduction}

A \textit{strong edge coloring} of a graph $G$ is a proper edge
coloring with no bichromatic path of length three; in other
words, each color class is an induced matching. The minimum
number of colors for which $G$ admits a strong edge coloring is
called the \textit{strong chromatic index}, and is denoted by
$\chis{G}$. In 1985, Erd\H{o}s and Ne\v{s}et\v{r}il proposed
the following conjecture which was later published
in~\cite{Erd88} and updated by Faudree et
al.~\cite{FauSchGyaTuz90} to fit the graphs with an even or odd
maximum degree.

\begin{conjecture}[Erd\H{o}s, Ne\v{s}et\v{r}il, 1988]\label{conj:Erdos}
The strong chromatic index of an arbitrary graph $G$ satisfies
	$$	
		\arraycolsep=1.4pt\def\arraystretch{1.2}
		\chi_s'(G) \le \left \{
\begin{array}{cl}
	    \tfrac{5}{4} \Delta(G)^2\,, &\quad \textrm{if }
        \Delta(G) \textrm{ is even}\\
		\tfrac{1}{4}(5\Delta(G)^2 - 2\Delta(G) +1)\,, &\quad \textrm{if }
        \Delta(G) \textrm{ is odd.}
\end{array} \right.	
	$$
\end{conjecture}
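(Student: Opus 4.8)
The inequality in Conjecture~\ref{conj:Erdos} is the celebrated Erd\H{o}s--Ne\v{s}et\v{r}il bound, and I should say at the outset that it remains open in full generality; what follows is therefore the line of attack I would pursue rather than a complete argument. The natural first step is to reformulate the strong chromatic index as an ordinary chromatic number: a strong edge coloring of $G$ is exactly a proper vertex coloring of the square $L(G)^2$ of the line graph, where two edges of $G$ conflict precisely when they share an endpoint or are joined by a third edge (that is, their distance in $L(G)$ is at most two). An edge $uv$ has at most $2(\Delta-1)$ conflicts of the first kind and on the order of $2\Delta^2$ of the second, so the maximum degree of the conflict graph is at most $2\Delta^2-2\Delta$, and a greedy coloring already gives $\chi_s'(G)\le 2\Delta^2-2\Delta+1$. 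The entire difficulty is to improve the leading constant from $2$ down to $\tfrac54$.

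To extract this improvement I would use the probabilistic method, specifically the semi-random ``nibble'' combined with the Lov\'asz Local Lemma, following the strategy of Molloy and Reed. The key structural observation is that the $\Delta$ edges incident with any fixed vertex form a clique in $L(G)^2$, so for a typical edge many of its would-be neighbors coincide or conflict with one another; the \emph{effective} number of distinct constraints is thus appreciably smaller than the crude count $2\Delta^2$. One iterates a partial random coloring: at each round colors are assigned to still-uncolored edges independently and uniformly, edges that clash are returned to uncolored, and the Local Lemma guarantees that with positive probability every edge keeps enough available colors in its palette to continue. Tracking the palette sizes and the number of surviving conflicts through the iteration is what converts the local density of the neighborhood into a saving on the constant.

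The extremal configuration that the constant $\tfrac54$ must respect is the balanced blow-up of the five-cycle $C_5$: replacing each vertex by an independent set of $\Delta/2$ vertices produces a graph of maximum degree $\Delta$ in which every edge forms its own color class, for a total of exactly $\tfrac54\Delta^2$. Any proof must therefore be tight against $C_5$-dense local structure, and the odd-$\Delta$ correction term in the statement reflects the parity obstruction to such blow-ups when $\Delta$ is odd. I would check the bound directly for this family and for small $\Delta$ to calibrate the constants before pushing the general argument, and I would expect the regular case---where the structure is most rigid and, as the present paper shows, is tightly linked to covers of Kneser graphs---to be the most promising testing ground for an exact decomposition.

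The main obstacle, and the reason the conjecture is still open, is precisely the size of the gap between what the nibble yields and the target. The Local Lemma analysis degrades gracefully and currently stops well short of $\tfrac54$: the best unconditional bounds remain of the shape $(2-\varepsilon)\Delta^2$ for a small explicit $\varepsilon$, because the dependencies one must control do not decouple cleanly once the constant drops much below $2$. Closing this gap seems to demand either a genuinely new decomposition of $G$ that isolates the $C_5$-dense regions and colors them optimally, or a second-moment or entropy-compression refinement that accounts for correlated conflicts more sharply than the Local Lemma does. Overcoming this is the step I would budget almost all of the effort for.
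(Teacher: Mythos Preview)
Your proposal is not a proof, and you are right to flag this up front: the statement is a \emph{conjecture}, and the paper does not prove it either. It is quoted as motivation, immediately followed by the remark that the problem is still widely open and that the best known bound is roughly $1.772\,\Delta(G)^2$ for large $\Delta$. There is therefore nothing in the paper to compare your argument to; the paper's own contribution concerns the exact value $\chi_s'(G)=2k-1$ for $k$-regular graphs and its connection to covers of $K(2k-1,k-1)$, not the Erd\H{o}s--Ne\v{s}et\v{r}il bound.

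Your outline of the nibble/Local Lemma approach and the $C_5$-blow-up tightness example is accurate as background, and you correctly identify the gap between $(2-\varepsilon)\Delta^2$ and $\tfrac54\Delta^2$ as the essential obstruction. But since neither you nor the paper closes that gap, the honest verdict is simply that the statement is not proved here.
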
\noindent
Despite many efforts, this conjecture is still widely open and
the best current upper bound $1.772\Delta(G)^2$ (provided that
$\Delta(G)$ is large enough) is due to Hurley et
al.~\cite{HurJoaKan20}.

The motivation for this note comes from one of the two extant
cases of the conjecture of Faudree et
al.~\cite[Section~4]{FauSchGyaTuz90} about strong edge
colorings of subcubic graphs, that is, graphs with maximum
degree $3$.
\begin{conjecture}[Faudree, Schelp, Gy\'{a}rf\'{a}s, Tuza, 1990]\label{conj:main}
Let $G$ be a graph with maximum degree~$3$. Then
\begin{itemize}
\item[$(1)$] $\chis{G} \le 10$,
		
\item[$(2)$] $\chis{G} \le 9$ if $G$ is bipartite,
		
\item[$(3)$] $\chis{G} \le 9$ if $G$ is planar,
		
\item[$(4)$] $\chis{G} \le 6$ if $G$ is bipartite and for
    every edge the sum of degrees of its endvertices is at
    most $5$,
		
\item[$(5)$] $\chis{G} \le 7$ if $G$ is bipartite with
    girth at least $6$,
		
\item[$(6)$] $\chis{G} \le 5$ if $G$ is bipartite with
    large girth.
	\end{itemize}
\end{conjecture}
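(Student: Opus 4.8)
The plan is to treat the six items uniformly as upper bounds on $\chis{G}$ for a subcubic graph $G$, and to attack each by the minimal-counterexample method reinforced with discharging, invoking the main theorem of the paper (a cubic graph is strongly $5$-edge-colorable exactly when it covers the Petersen graph $K(5,2)$) for the extremal item~(6). For orientation, observe the trivial greedy bound: when $\Delta=3$, an edge $uv$ has at most $2+2+4+4=12$ edges within distance two of it, so a naive greedy strong coloring already gives $\chis{G}\le 13$. Every part thus asks us to save between three and eight colors using the extra hypothesis, and the uniform engine is the following: assume $G$ is a counterexample minimizing $|V(G)|+|E(G)|$, show it is $2$-connected with minimum degree at least~$2$ (degree-one vertices being trivially reducible), enumerate local configurations that cannot occur in such a minimal $G$, and then discharge to prove one of them is unavoidable.

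For part~(1), the first step is to build a list of reducible configurations---adjacent vertices of degree two, a degree-two vertex with a low-degree neighbour, and short low-degree paths---and to show in each case that a strong $10$-coloring of the reduced graph extends, the saving coming from the fact that several of the $\le 12$ forbidden colors coincide near a low-degree vertex. A discharging scheme with a suitable initial charge on the vertices then forces such a configuration to appear. Parts~(2) and~(3) each buy one further color. In the bipartite case I would use the absence of triangles to recount the genuinely distinct distance-two edges and push the palette down to~$9$; in the planar case I would instead feed Euler's formula, in the form $\sum_v(d(v)-4)+\sum_f(\ell(f)-4)=-8$, into the discharging so that the densest distance-two neighbourhoods are excluded.

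Parts~(4) and~(5) are the restricted bipartite cases, where the hypotheses sharply thin out the distance-two structure. Under the degree-sum-at-most-$5$ condition of part~(4), every edge has an endpoint of degree~$2$, which severely limits how many edges lie at distance two and yields~$6$ after ordering edges by their low-degree endpoint; under the girth-at-least-$6$ condition of part~(5), the absence of $4$-cycles makes the distance-two conflict structure locally tree-like, and combining this sparsity with a face-based discharging argument should reach~$7$. In both cases the main work is bookkeeping the overlaps among forbidden colors rather than any single hard idea.

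The genuine obstacle is part~(6). Here the target~$5$ equals the absolute minimum $2k-1$ for cubic graphs, so no color may be wasted and the coloring is globally rigid. My plan is to prove part~(6) through the paper's main theorem: reduce the statement ``$\chis{G}\le 5$'' to ``$G$ covers the Petersen graph'', and then attempt to show that a bipartite cubic graph of sufficiently large girth must admit such a covering projection, building it outward from a root edge along the locally tree-like neighbourhood that large girth supplies. The hard part will be closing this local extension up into a globally consistent projection: a cover of the Petersen graph is a rigid object, whereas large girth controls only the local picture, so the crux is whether forbidding short cycles suffices to guarantee global consistency. I expect this to be the decisive step of the whole conjecture and the likeliest place for the argument to fail.
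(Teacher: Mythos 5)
The decisive step you flagged in part~(6) is not merely hard---it is impossible, because the statement you set out to prove is false, and this paper is precisely the one that refutes it. The statement is presented here as a \emph{conjecture}; the paper proves no part of it. Your intended reduction is correct as far as it goes: by Theorem~\ref{thm:main} (equivalently Corollary~\ref{cor:main}), a cubic graph $G$ satisfies $\chis{G}=5$ if and only if $G$ covers the Petersen graph. But covering the Petersen graph carries a global arithmetic constraint that no local, large-girth argument can deliver: every covering projection is $d$-fold for some positive integer $d$, so any graph covering the Petersen graph has order $10d$, a multiple of $10$. The paper's Theorem~\ref{thm:disproof} exploits exactly this: using the Exoo--Jajcay voltage-lift construction it builds connected bipartite cubic graphs $G_n$ of girth $2^n$ whose order is a power of $2$; such a graph cannot cover the Petersen graph, whence $\chis{G_n}\ge 6$. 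So your plan to grow a covering projection outward from a root edge through the locally tree-like neighbourhood must fail at the closing-up stage, and not for a repairable technical reason---there is a divisibility obstruction invisible to any local extension argument. Your instinct that this was the likeliest failure point was sound, but the correct use of the characterization theorem is the opposite of yours: it is the tool for \emph{disproving}~(6), not proving it.

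Concerning the remaining items: your discharging programme is not comparable to anything in this paper, which contains no proof of (1)--(5). Items (1)--(4) are indeed true, but were established elsewhere (Andersen and Hor\'ak--Qing--Trotter for (1), Steger--Yu for (2), Kostochka et al.\ for (3), Wu--Lin for (4)), and your sketches for them are outlines rather than proofs (for instance, the claimed reducibility of the listed low-degree configurations and the unavoidability discharging are asserted, not carried out). Item (5) remains open; the paper in fact proposes strengthening it by dropping the bipartiteness hypothesis (Conjecture~\ref{conj:new}), so a correct proof of your part (5) would be a genuine advance---but nothing in your outline yet supplies one. The essential verdict, however, rests on part (6): no proof attempt can succeed there, and the paper's Theorems~\ref{thm:main} and~\ref{thm:disproof} show concretely why.
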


The first four cases of Conjecture~\ref{conj:main} have already
been resolved. Case~$(1)$, which~is just a special case of the
conjecture of Erd\H{o}s and Ne\v{s}et\v{r}il, has been
confirmed by Andersen~\cite{And92} and by Hor\'{a}k et
al.~\cite{HorQinTro93}. Case~$(2)$ was proved in 1993 by Steger
and Yu~\cite{SteYu93}, and Case~$(3)$, just recently, by
Kostochka et al.~\cite{KosLiRukSanWanYu16}. Case~$(4)$ was
established by Wu and Lin~\cite{WuLin08}; it easily follows
also from a result of Maydanskiy~\cite{May05}. Up to our best
knowledge, Cases~$(5)$ and~$(6)$ are still open, although
several partial results confirming Case~$(6)$ are known
\cite{BorIva13,Lidetal18}.

Our aim is to show that Case~$(6)$ of
Conjecture~\ref{conj:main} is false. In order to be able to
produce infinitely many counterexamples, in
Theorem~\ref{thm:main} we characterize $k$-regular graphs with
strong chromatic index $2k-1$ as those which admit a covering
projection onto the Kneser graph $K(2k-1,k-1)$. In particular,
when $k=3$, a cubic graph is strongly $5$-edge-colorable if and
only if it covers the Kneser graph $K(5,2)$. However, the
latter is nothing but the Petersen graph. Subsequently, in
Theorem~\ref{thm:disproof}, we construct bipartite cubic graphs
of arbitrarily large girth that do not cover the Petersen
graph. By the former result, their strong chromatic index must
be at least $6$.

The last three sections of this paper are devoted to further
aspects of strong $(2k-1)$-colorings of $k$-regular graphs,
with emphasis on the cubic case. In Section~\ref{sec:equiv}, we
reflect on the fact, established in Section~\ref{sec:main},
that the Kneser graph $K(2k-1,k-1)$ has a unique strong
$(2k-1)$-coloring up to automorphism. With the help of a result
borrowed from the theory of voltage graphs we are able to
provide an example of a cubic graph on 40 vertices which covers
the Petersen graph and admits two substantially different
strong $5$-edge-colorings. In Section~\ref{sec:normal}, we
explain the relationship of strong $5$-colorings of cubic
graphs to the famous Petersen coloring conjecture, and in the
final section we present several open problems. Among them, we
propose a strengthening of Case~(5) of
Conjecture~\ref{conj:main}, its only remaining open case.

\section{Main results}\label{sec:main}

Let $\phi$ be a proper edge coloring of a graph $G$. An edge
$e$ of $G$ is said to be \textit{rich} with respect to $\phi$
if all the edges adjacent to $e$ receive pairwise distinct
colors. If $\phi$ is a strong edge coloring, then each edge
must obviously be rich, and vice versa. In particular, every
strong edge coloring of a $k$-regular graph requires at least
$2k-1$ colors. It has been shown in
\cite[Theorem~8]{FauSchGyaTuz90} that this minimum is also
sufficient if $G$ is the Kneser graph $K(2k-1,k-1)$. Recall
that the \textit{Kneser graph} $K(m,n)$, with $m\ge 2n+1$ and
$n\ge 2$, is the graph whose vertices are the $n$-element
subsets of a ground set of $m$ elements, say $\{1,2,\ldots,
m\}$, and where two vertices are adjacent if and only if the
two corresponding sets are disjoint. The Kneser graphs
$K(2k-1,k-1)$ are commonly known as the \textit{odd graphs}
$O_k$ and  have been subject to numerous investigations (see
for example the work of Biggs \cite{Biggs}). The smallest odd
graph $K(5,2)$ is isomorphic to the Petersen graph.

Every odd graph $K(2k-1,k-1)$ has a natural -- or
\textit{canonical} -- strong $(2k-1)$-edge-coloring, which we
denote by $\sigma_k$. It can be described as follows: for any
edge $uv$ of $K(2k-1,k-1)$ the set $u\cup
v\subseteq\{1,2,\ldots, 2k-1\}$ contains precisely $2k-2$
elements, so we can set $\sigma_k(uv)$ to be the missing
element of the ground set. It is easy to see that this coloring
is indeed strong. The canonical strong $5$-edge-coloring
$\sigma_3$ of the Petersen graph is represented in
Figure~\ref{fig:pet}.

Observe that every strong $(2k-1)$-edge-coloring $\sigma$ of
any $k$-regular graph $G$ induces a vertex coloring $\sigma'$
of $G$ where every vertex $v$ is colored with the
$(k-1)$-element set of colors that do not occur on the edges
incident with $v$. We call $\sigma'$ the \textit{derived vertex
coloring}. Since $\sigma$ is strong, the colors of any two
adjacent vertices of $G$ are disjoint $(k-1)$-subsets; in
particular, $\sigma'$ is a proper vertex coloring. For the
Petersen graph the derived coloring $\sigma_3'$ is again
indicated in Figure~\ref{fig:pet}.

It is quite remarkable that the edge coloring $\sigma$ can be
uniquely reconstructed from the vertex coloring $\sigma'$: the
edge $uv$ is colored with the element of the ground set not
occurring in the set $\sigma'(u)\cup\sigma'(v)$. This fact
readily implies that the canonical coloring is a unique strong
$(2k-1)$-edge-coloring of $K(2k-1,k-1)$ up to automorphism of
$K(2k-1,k-1)$. Indeed, consider an arbitrary strong
$(2k-1)$-edge-coloring $\tau$ of $K(2k-1,k-1)$. The derived
vertex coloring $\tau'$ associates with each vertex $v$ of
$K(2k-1,k-1)$ -- which is a $(k-1)$-element subset of
$\{1,2,\ldots,2k-1\}$ -- another $(k-1)$-element subset
$\tau'(v)$ of the same set. In other words, $\tau'$ sends a
vertex of $K(2k-1,k-1)$ to another such vertex. Since $\tau$ is
strong, the assignment $v\mapsto \tau'(v)$ is
adjacency-preserving and injective on the neighbors of $v$. It
means that $\tau'$ determines a degree-preserving endomorphism
$\alpha$ of $K(2k-1,k-1)$, which necessarily must be an
automorphism. The way how the canonical $(2k-1)$-edge-coloring
$\sigma_k$ was defined implies that $\alpha$ transforms
$\sigma_k'$ to $\tau'$, and consequently $\sigma_k$ to $\tau$.
Summing up, any two strong $(2k-1)$-edge-colorings of
$K(2k-1,k-1)$ are equivalent under the action of its
automorphism group. Taking into account the fact that every
automorphism of $K(2k-1,k-1)$ is induced by a permutation of
the ground set \cite[Statement~3.1]{Biggs} we can conclude that
any two strong $(2k-1)$-colorings of $K(2k-1,k-1)$ can be
obtained from each other by a permutation of colors.

\begin{figure}[ht]
\begin{center}	
		\includegraphics{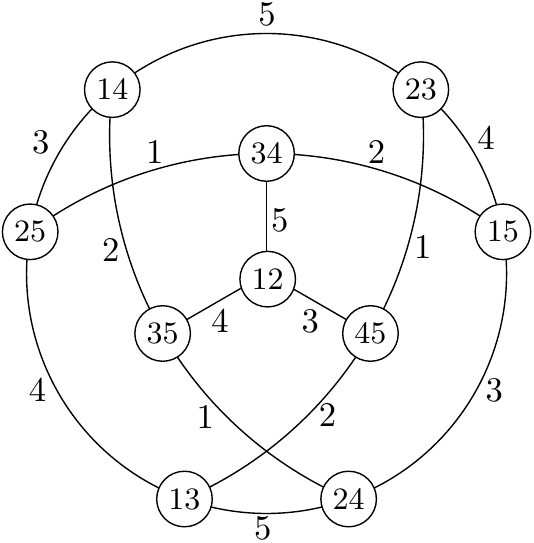}
	\caption{The unique strong $5$-edge-coloring of the Petersen graph
             along with the derived proper vertex coloring}
	\label{fig:pet}
\end{center}
\end{figure}

We aim to prove that all $k$-regular graphs whose strong
chromatic index equals $2k-1$ are closely related to the Kneser
graph $K(2k-1,k-1)$, the relationship being a covering
projection onto $K(2k-1,k-1)$. The pertinent definitions are
now in order.

A surjective graph homomorphism $f\colon \tilde G\to G$ is
called a \textit{covering projection} if for every vertex
$\tilde v$ of $\tilde G$ the set of edges incident with $\tilde
v$ is bijectively mapped onto the set of edges incident with
$f(\tilde v)$. (If $G$ is permitted to contain loops, then the
definition has to be applied to the half-edges incident with
$v$ rather than the edges themselves.) The graph $G$ is usually
referred to as the \textit{base graph} and $\tilde G$ as a
\textit{covering graph} or a \textit{lift} of $G$. A graph
$\tilde G$ \textit{covers} $G$ if there exists such a covering
projection.

It is well known (see \cite{GrossTucker}, Section 2.3) that for
every covering projection $f\colon\tilde G\to G$ there exists a
positive integer $d$ such that every vertex of $G$ has exactly
$d$ preimages and every edge of $G$ has exactly $d$ preimages;
such a cover is said to be \textit{$d$-fold}. For example, the
Petersen graph admits a $5$-fold covering projection onto the
dumbbell graph (which consists of two adjacent vertices and a
loop attached to each of them).

Covering graphs have been useful in numerous parts of graph
theory, especially when a locally defined structure on the base
graph can be `lifted' to the covering graph. This is true, for
example, for edge colorings, flows, embeddings on surfaces, and
other graph structures. The following easy fact is in a similar
vein.

\begin{lemma}\label{lm:lift}
Let $f \colon \tilde{G}\to G$ be a covering projection of
graphs. If $G$ is strongly $n$-edge-colorable for
some integer $n$, then so is $\tilde G$.
\end{lemma}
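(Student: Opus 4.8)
The plan is to show that a strong $n$-edge-coloring of the base graph $G$ pulls back along the covering projection $f$ to a strong $n$-edge-coloring of the cover $\tilde G$. Concretely, suppose $\phi$ is a strong $n$-edge-coloring of $G$, and define $\tilde\phi(\tilde e) = \phi(f(\tilde e))$ for every edge $\tilde e$ of $\tilde G$. The entire task is then to verify that this naive pullback $\tilde\phi$ is indeed a strong edge coloring, and the key tool is the \emph{local bijectivity} of $f$: for each vertex $\tilde v$ of $\tilde G$, the map $f$ restricts to a bijection between the edges incident with $\tilde v$ and the edges incident with $f(\tilde v)$.

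First I would check that $\tilde\phi$ is a proper edge coloring. Two adjacent edges $\tilde e_1, \tilde e_2$ share a vertex $\tilde v$; by local bijectivity their images $f(\tilde e_1), f(\tilde e_2)$ are distinct edges both incident with $f(\tilde v)$, hence adjacent in $G$, so $\phi$ assigns them different colors and therefore $\tilde\phi(\tilde e_1)\ne\tilde\phi(\tilde e_2)$. Next I would rule out bichromatic paths of length three. Recall from the excerpt that a proper edge coloring is strong exactly when every edge is \emph{rich}, i.e.\ all edges adjacent to it receive pairwise distinct colors. So it suffices to show that every edge $\tilde e = \tilde u\tilde v$ of $\tilde G$ is rich with respect to $\tilde\phi$. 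The edges adjacent to $\tilde e$ are exactly those incident with $\tilde u$ or with $\tilde v$ (other than $\tilde e$ itself); applying local bijectivity at both $\tilde u$ and $\tilde v$, these map to the edges adjacent to $f(\tilde e)$ in $G$. Since $\phi$ is strong, $f(\tilde e)$ is rich in $G$, so the adjacent edges of $f(\tilde e)$ carry pairwise distinct colors, and pulling back gives that the edges adjacent to $\tilde e$ carry pairwise distinct colors as well. Hence $\tilde e$ is rich, and as this holds for every edge, $\tilde\phi$ is strong.

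The one point demanding genuine care — and the step I would flag as the main obstacle — is confirming that the two local bijections at $\tilde u$ and at $\tilde v$ together cover \emph{all} edges adjacent to $\tilde e$ with no collisions or omissions, i.e.\ that the full set of neighbors of $f(\tilde e)$ in the base graph is faithfully represented. The subtlety arises from the parenthetical remark in the excerpt about graphs with loops and multiple edges: the richness condition on $f(\tilde e)$ is a statement about the half-edges incident with $f(\tilde u)$ and $f(\tilde v)$, and one must ensure that the covering projection's half-edge bijection matches up correctly so that distinct half-edges adjacent to $\tilde e$ are sent to distinct half-edges adjacent to $f(\tilde e)$. Once one formulates the argument in terms of half-edges (as the definition of covering projection anticipates), this becomes a direct consequence of local bijectivity applied at each of the two endpoints, and no global or combinatorial argument about the structure of $\tilde G$ is required. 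I therefore expect the proof to be short, with the only real content being the careful bookkeeping of half-edges in the possibly-non-simple setting.
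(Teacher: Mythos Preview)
Your proposal is correct and follows essentially the same approach as the paper: define the pullback coloring $\tilde\phi = \phi \circ f$ and argue that richness of every edge in $G$ transfers to richness of every edge in $\tilde G$ via the local bijectivity of the covering projection. The paper's proof is terser---it simply asserts that ``the definition of a covering immediately implies'' that richness lifts---whereas you spell out the properness check and the half-edge bookkeeping, but the underlying idea is identical.
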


\begin{proof}
Let $\phi$ be a strong $n$-edge-coloring of $G$. Define an edge
coloring $\tilde\phi$ of $\tilde G$ by setting
$\tilde\phi(x)=\phi(f(x))$ for each edge $x$ of $\tilde G$. As
previously mentioned, every edge of $G$ with respect to $\phi$
is rich. The definition of a covering immediately implies that
the same holds for each edge of $\tilde G$ with respect to
$\tilde\phi$. Therefore $\tilde\phi$ is a strong
$n$-edge-coloring of $\tilde G$.
\end{proof}

Now we are ready for our main results.

\begin{theorem}\label{thm:main}
The strong chromatic index of a $k$-regular graph $G$ equals
$2k-1$ if and only if $G$ covers the Kneser graph
$K(2k-1,k-1)$.
\end{theorem}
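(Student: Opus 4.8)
The plan is to prove both directions of the equivalence. One direction is essentially immediate from Lemma~\ref{lm:lift}: if $G$ covers $K(2k-1,k-1)$, then since the Kneser graph $K(2k-1,k-1)$ is strongly $(2k-1)$-edge-colorable (by the cited result from \cite{FauSchGyaTuz90}, realized by the canonical coloring $\sigma_k$), the lift $G$ is also strongly $(2k-1)$-edge-colorable. Combined with the observation from the text that every $k$-regular graph needs at least $2k-1$ colors, we get $\chi_s'(G)=2k-1$. So the content lies entirely in the forward direction.

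For the forward direction, I would start from a strong $(2k-1)$-edge-coloring $\sigma$ of $G$ and pass to its derived vertex coloring $\sigma'$, exactly as introduced in the excerpt: each vertex $v$ receives the $(k-1)$-subset of $\{1,\ldots,2k-1\}$ consisting of the colors missing at $v$. The key point is that $\sigma'$ is a map from $V(G)$ into the vertex set of $K(2k-1,k-1)$. I claim this map is in fact a covering projection. First, it is a graph homomorphism: for adjacent $u,v$ the sets $\sigma'(u),\sigma'(v)$ are disjoint $(k-1)$-subsets (because $\sigma$ is strong), hence adjacent in the Kneser graph. Second, I would verify the local bijectivity condition. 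Fix a vertex $v$ with derived color $S=\sigma'(v)$, a $(k-1)$-set; its complement $\{1,\ldots,2k-1\}\setminus S$ has $k$ elements, which are exactly the colors appearing on the $k$ edges at $v$. For each edge $vw$, with color $c$, the neighbor $w$ must miss color $c$, so $c\in\{1,\ldots,2k-1\}\setminus\sigma'(w)$, and richness of the edge forces $\sigma'(w)$ to be determined by $c$ (indeed $\sigma'(w)=\{1,\ldots,2k-1\}\setminus(S'\cup\{c\})$ type reasoning). The upshot is that the $k$ edges at $v$ are sent bijectively onto the $k$ edges of $K(2k-1,k-1)$ incident with the vertex $S$, which is precisely the covering condition.

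The main obstacle I anticipate is establishing \textbf{surjectivity} of $\sigma'$ onto all of $V(K(2k-1,k-1))$ and onto all edges, since the definition of covering projection requires $f$ to be surjective. Local bijectivity at each vertex shows that the image is a union of connected components of $K(2k-1,k-1)$; since the odd graph $K(2k-1,k-1)$ is connected and $G$ is nonempty, the image is all of it, giving surjectivity on vertices, and then the local bijections yield surjectivity on edges as well. I would need to check carefully that the derived coloring really is \emph{edge}-injective at each vertex, i.e.\ that distinct edges at $v$ go to distinct edges at $S$; this follows because distinct edges at $v$ carry distinct colors (properness) and the color of an edge $vw$ equals the Kneser-edge color $\sigma_k(\sigma'(v)\sigma'(w))$, which is the unique missing element of $\sigma'(v)\cup\sigma'(w)$. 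Thus the edge-color under $\sigma$ coincides with the canonical edge-label in the base, so the local map is determined and bijective.

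Having shown $\sigma'\colon G\to K(2k-1,k-1)$ is a covering projection, the forward direction is complete. I would close by remarking that the two directions together, using the compatibility $\sigma(vw)=\sigma_k(\sigma'(v)\sigma'(w))$, show that the covering projection is not merely an abstract map but one under which the canonical coloring $\sigma_k$ pulls back to $\sigma$; this matches the earlier observation that $\sigma$ is reconstructible from $\sigma'$ and foreshadows the uniqueness discussion in Section~\ref{sec:equiv}.
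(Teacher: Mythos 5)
Your proposal is correct and follows essentially the same route as the paper: the backward direction via Lemma~\ref{lm:lift}, and the forward direction by showing that the derived vertex coloring $\sigma'$ itself defines a covering projection onto $K(2k-1,k-1)$, with local bijectivity coming from the strong condition and surjectivity from the connectedness and $k$-regularity of the image in the connected odd graph. The only cosmetic difference is that you make the determination $\sigma'(w)=\bigl(\{1,\ldots,2k-1\}\setminus\sigma'(v)\bigr)\setminus\{\sigma(vw)\}$ explicit, where the paper simply asserts that the neighbors of $v$ receive pairwise distinct derived colors.
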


\begin{proof}
The backward implication is a direct consequence of
Lemma~\ref{lm:lift}, so we are left with the forward
implication.

Let $\sigma$ be a strong edge coloring of a $k$-regular graph
$G$ with $2k-1$ colors from the set $\{1,2,\ldots, 2k-1\}$.
Without loss of generality we may assume that $G$ is connected.
To define a covering projection $f\colon G\to K(2k-1,k-1)$ we
use the derived vertex coloring $\sigma'$ of $G$. Recall that
for each vertex $v$ of $G$ the color $\sigma'(v)$ is a
$(k-1)$-subset of $\{1,2,\ldots, 2k-1\}$. Thus there is a
unique vertex $\bar v$ of $K(2k-1,k-1)$ such that
$\sigma'(v)=\bar v$. Define $f$ by sending $v$ to $\bar v$. The
mapping is clearly correctly defined.

We first observe that $f\colon G\to K(2k-1,k-1)$ is a
homomorphism. To see this, note that the colors of adjacent
vertices in $G$ are disjoint $(k-1)$-elements sets. It follows
that $f$ sends adjacent vertices $u$ and $v$ to disjoint sets
$\bar u$ and $\bar v$. However, in $K(2k-1,k-1)$ such vertices
are adjacent. Therefore $f$ sends adjacent vertices to adjacent
vertices.

To show that $f$ is a covering projection we need to check that
$f$ takes the neighborhood of every vertex bijectively to
$K(2k-1,k-1)$, and that $f$ is surjective. Consider an
arbitrary vertex $v$ of $G$, and note that the $k$ neighbors
$u_i$ of $v$, where $1 \le i \le k$, receive from $\sigma'$
pairwise distinct colors $\sigma'(u_i)$. Since $f$ sends each
$u_i$ to the vertex $\bar u_i=\sigma'(u_i)$ in $K(2k-1,k-1)$,
it takes the $k$ neighbors of $v$ to $k$ distinct neighbors of
$f(v)$, as required.

Finally, to check that $f$ is surjective it is sufficient to
realize that $f(G)$ is a connected subgraph of $K(2k-1,k-1)$
and that $f(G)$ is $k$-regular. Therefore $f(G)=K(2k-1,k-1)$,
which proves that $f$ is a covering projection.
\end{proof}

For cubic graph the previous theorem amounts to the following.

\begin{corollary}\label{cor:main}
The strong chromatic index of a cubic graph $G$ equals $5$ if
and only if $G$ covers the Petersen graph.
\end{corollary}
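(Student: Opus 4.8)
The plan is to obtain this corollary as the special case $k=3$ of Theorem~\ref{thm:main}. Setting $k=3$, a $k$-regular graph is precisely a cubic graph, the quantity $2k-1$ becomes $5$, and the Kneser graph $K(2k-1,k-1)$ becomes $K(5,2)$. With these substitutions Theorem~\ref{thm:main} reads verbatim: the strong chromatic index $\chis{G}$ of a cubic graph $G$ equals $5$ if and only if $G$ covers $K(5,2)$.

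The only additional ingredient is the identification of $K(5,2)$ with the Petersen graph. This is the classical fact, already recalled in the excerpt, that the smallest odd graph $O_3=K(5,2)$ is isomorphic to the Petersen graph: its vertices are the ten $2$-element subsets of $\{1,2,3,4,5\}$, and two such subsets are adjacent exactly when they are disjoint, which is precisely one of the standard descriptions of the Petersen graph. Feeding this isomorphism $K(5,2)\cong P$ into the statement of Theorem~\ref{thm:main} specialized to $k=3$ immediately yields the corollary, since covering $K(5,2)$ and covering the Petersen graph are the same condition.

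Since the result is a direct specialization, I do not anticipate any genuine obstacle: the whole of the content is carried by Theorem~\ref{thm:main}, and the single thing that must be invoked beyond it is the well-known identification $K(5,2)\cong P$. There is no calculation to grind through and no delicate case to handle; the proof is essentially a substitution followed by a one-line appeal to a standard isomorphism.
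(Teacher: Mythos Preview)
Your argument is correct and matches the paper's approach: the corollary is stated immediately after Theorem~\ref{thm:main} as its specialization to $k=3$, using the identification $K(5,2)\cong P$ already noted in the text. No additional ingredients are needed.
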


Our next theorem refutes Case~$(6)$ of
Conjecture~\ref{conj:main}.

\begin{theorem}\label{thm:disproof}
There exist bipartite cubic graphs with arbitrarily large girth
and strong chromatic index at least $6$.
\end{theorem}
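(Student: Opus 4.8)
The plan is to derive Theorem~\ref{thm:disproof} from Corollary~\ref{cor:main} together with a divisibility obstruction, so that the work reduces to a purely existential construction. Since every cubic graph $G$ satisfies $\chis{G}\ge 5$, a connected cubic graph has $\chis{G}\ge 6$ as soon as it fails to cover the Petersen graph. Now recall from the excerpt that a connected $d$-fold covering projection onto a graph on $t$ vertices produces a graph on exactly $dt$ vertices; as the Petersen graph has $t=10$ vertices, every connected cubic graph that covers it has order divisible by $10$. Hence it suffices to exhibit, for each integer $g$, a connected bipartite cubic graph of girth at least $g$ whose number of vertices is \emph{not} a multiple of $10$: such a graph cannot cover the Petersen graph, and therefore has strong chromatic index at least $6$ by Corollary~\ref{cor:main}. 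This automatically yields infinitely many counterexamples to Case~$(6)$.

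First I would reduce the theorem to an existence statement about high-girth bipartite cubic graphs of prescribed order, and realize such graphs as random lifts of a fixed small base. Take $K_{3,3}$, fix $N$, and form a random $N$-lift by assigning a uniformly random permutation of $\{1,\dots,N\}$ to every edge. Each such lift is automatically simple (because $K_{3,3}$ is simple and loopless), cubic (lifts preserve degrees), and bipartite (the two parts of $K_{3,3}$ lift to the two parts of the lift), and it has exactly $6N$ vertices. Choosing $N$ not divisible by $5$ then gives $6N\not\equiv 0\pmod{10}$, so the order is never a multiple of $10$. What remains is to arrange girth at least $g$ together with connectedness.

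The hard part, and really the only nontrivial point, is to guarantee large girth while retaining control of the order. For a random $N$-lift of a fixed base graph the number of cycles of any fixed length $\ell$ is, as $N\to\infty$, asymptotically Poisson with a mean depending only on the base and on $\ell$ (each short cycle of the lift corresponds to a closed non-backtracking walk in the base whose monodromy permutation has a fixed point, and the relevant product of independent uniform permutations is again uniform). Consequently the probability that the lift has girth at least $g$ tends to a positive constant, while the probability that it is connected tends to $1$; hence for every $g$ and every sufficiently large $N$ with $5\nmid N$ some $N$-lift of $K_{3,3}$ is simultaneously connected, bipartite, cubic, simple, of girth at least $g$, and of order $6N\not\equiv 0\pmod{10}$. (Alternatively one may start from any explicit family of bipartite cubic graphs of unbounded girth, for instance incidence-type or Ramanujan constructions, and correct the order by a girth-preserving amalgamation; the random-lift route is the cleanest because simplicity and bipartiteness come for free.) Letting $g\to\infty$ produces the required graphs and completes the proof.
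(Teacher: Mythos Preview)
Your argument is correct and rests on the same key observation as the paper: a connected cubic graph whose order is not divisible by $10$ cannot cover the Petersen graph, and hence has $\chis{G}\ge 6$ by Corollary~\ref{cor:main}. The two proofs diverge only in how they manufacture bipartite cubic graphs of large girth with controlled order. The paper gives an explicit recursive construction, starting from the theta graph on two vertices and repeatedly applying the $\mathbb{Z}_2$-homology lift of Exoo and Jajcay~\cite{ExoJaj11}, which doubles the girth at each step while keeping the order a power of~$2$. You instead take random $N$-lifts of $K_{3,3}$ with $5\nmid N$ (so the order $6N$ is not a multiple of $10$) and appeal to the standard Amit--Linial-type facts that short-cycle counts in random lifts are asymptotically Poisson and that such lifts are asymptotically almost surely connected. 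The paper's route is fully constructive and self-contained, and its tower of covers over $Q_3$ even pins down $\chis{G_n}=6$ exactly; your route is quicker if one is willing to quote the random-lift literature, and it makes bipartiteness and simplicity of the examples immediate. One small caveat: your one-line justification of the Poisson limit (``the relevant product of independent uniform permutations is again uniform'') is only literally true for closed walks that traverse some edge exactly once, so in a fully written version you would want to cite the result rather than sketch it.
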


\begin{proof}
We construct an infinite sequence $(G_n)_{n\ge 1}$ of connected
bipartite cubic graphs such that $G_n$ has girth $2^n$ and
order a power of $2$. Since the order of any graph that covers
the Petersen graph is a multiple of $10$, it follows that $G_n$
does not cover the Petersen graph for any $n\ge 1$. From
Corollary~\ref{cor:main} we get that $\chis{G_n}\ge 6$ for each
$n\ge 2$.

We now construct the sequence by induction on $n$. For the
starting graph $G_1$ we take the cubic graph consisting of two
vertices and three parallel edges, which is connected,
bipartite, and has girth $2$. Assume that we have already
constructed the graph $G_n$ for some $n\ge 1$. By the induction
hypothesis, $G_n$ is connected and bipartite with girth $2^n$
and order a power of $2$. To create $G_{n+1}$, we employ the
construction of Exoo and Jajcay described in the proof of
Theorem~3.1 of \cite{ExoJaj11}. Their method uses a
$\mathbb{Z}_2$-homology voltage assignment on a connected graph
$H$ of girth $g$ to produce a $2^{\beta(H)}$-fold covering
projection $\tilde H\to H$ with $\tilde H$ connected of girth
$2g$, where $\beta(G)$ denotes the cycle rank (Betti number)
of~$G$. (We refer the reader for details to \cite{ExoJaj11}.)
If we apply this construction to $G_n$, we obtain a connected
graph $G_{n+1}$ of girth $2^{n+1}$ and order
$2^{\beta(G_n)}m_n$, where $m_n$ is the order of $G_n$. Since
$m_n$ is a power of $2$, so is the order of $G_{n+1}$.
Furthermore, $G_{n+1}$ is bipartite because any covering graph
over a bipartite base graph is bipartite. This concludes the
construction of $(G_n)_{n\ge1}$ as well as the entire proof.
\end{proof}

\begin{remark}
{\rm The reader can check that the graph $G_2$ constructed in
the previous proof is isomorphic to the graph of the $3$-cube
$Q_3$, whose strong chromatic index equals $6$. Since the
composition of covering projections is again a covering
projection, each $G_n$ with $n\ge 3$ covers $G_2$, and
therefore it is strongly $6$-edge-colorable by
Lemma~\ref{lm:lift}. Theorem~\ref{thm:main} now implies that
$\chis{G_n}=6$ for each $n\ge 2$. }
\end{remark}

\section{Equivalence of coverings and colorings}\label{sec:equiv}

We have proved that the odd graph $K(2k-1,k-1)$ admits a unique
strong $(2k-1)$-edge-coloring up to automorphism. It is
therefore natural to ask whether the same holds for the graphs
that cover it. As we shall see in this section, the answer is
negative, which at the first glance might seem to be rather
counter-intuitive.

We first show that the problem of finding two essentially
different strong $(2k-1)$-edge-colorings of a $k$-regular graph
is closely related to the problem of finding two non-equivalent
covering projections of the same $k$-regular graph onto the odd
graph $K(2k-1,\penalty0 k-1)$.

We call two edge colorings $\phi_1$ and $\phi_2$ of a
graph $G$ \textit{equivalent} if there exists an automorphism
$\alpha$ of $G$ such that $\phi_2=\phi_1\alpha$. Similarly, we
say that two covering projections $f_1\colon G_1\to G$ and
$f_2\colon G_2\to G$ are \textit{equivalent} if there exists an
isomorphism $\xi\colon G_2\to G_1$ such that $f_2= f_1\alpha$.

The following theorem shows that for strong
$(2k-1)$-edge-colorings of $k$-regular graphs equivalence of
colorings coincides with equivalence of coverings.

\begin{theorem}\label{thm:equivcol}
Every strong $(2k-1)$-edge-coloring $\sigma$ of a $k$-regular
graph $G$ determines a unique covering projection
$f_{\sigma}\colon G\to K(2k-1,k-1)$. Moreover, two such
colorings $\sigma$ and $\tau$ are equivalent if and only if the
corresponding covering projections $f_{\sigma}$ and
$f_{\tau}$ of $G$ are equivalent.
\end{theorem}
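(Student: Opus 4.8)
The plan is to first pin down the correspondence $\sigma\mapsto f_\sigma$ precisely and then read off the equivalence statement as a formal consequence. For the first assertion I would recall the construction from the proof of Theorem~\ref{thm:main}: given $\sigma$, the derived vertex coloring $\sigma'$ sends each vertex $v$ to the $(k-1)$-subset $\sigma'(v)\subseteq\{1,\dots,2k-1\}$ of colors missing at $v$, and $f_\sigma$ is defined by $f_\sigma(v)=\sigma'(v)$; Theorem~\ref{thm:main} already guarantees that $f_\sigma$ is a covering projection. The key additional fact I would establish is the reconstruction identity $\sigma=\sigma_k\circ f_\sigma$: for an edge $uv$ the sets $\sigma'(u)$ and $\sigma'(v)$ are disjoint $(k-1)$-subsets, so their union omits exactly one element of the ground set, and since $\sigma(uv)$ lies in neither $\sigma'(u)$ nor $\sigma'(v)$, that missing element is precisely $\sigma(uv)=\sigma_k(f_\sigma(u)f_\sigma(v))$. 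This identity forces uniqueness: if $f$ is any covering projection with $\sigma_k\circ f=\sigma$, then the colors appearing at $v$ equal the $\sigma_k$-colors appearing at the vertex $f(v)$ of $K(2k-1,k-1)$, which are exactly the complement of $f(v)$; hence $f(v)$ is the set of colors missing at $v$, namely $\sigma'(v)$, so $f=f_\sigma$. Conversely, pulling back $\sigma_k$ along any covering projection yields a strong $(2k-1)$-edge-coloring by Lemma~\ref{lm:lift}, so $\sigma\mapsto f_\sigma$ is in fact a bijection with inverse $f\mapsto\sigma_k\circ f$.

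With the identity $\sigma=\sigma_k\circ f_\sigma$ in hand, the equivalence statement becomes a short computation, the crucial point being that a single automorphism of $G$ witnesses both equivalences. For the forward direction I would take an automorphism $\alpha$ of $G$ with $\tau=\sigma\alpha$ and observe that $f_\sigma\alpha$ is a covering projection onto $K(2k-1,k-1)$, being a composition of covering projections, which satisfies $\sigma_k\circ(f_\sigma\alpha)=(\sigma_k\circ f_\sigma)\alpha=\sigma\alpha=\tau$; by the uniqueness just proved this gives $f_\tau=f_\sigma\alpha$, so $f_\sigma$ and $f_\tau$ are equivalent coverings with witnessing isomorphism $\alpha$. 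For the backward direction I would take an isomorphism $\xi$ of $G$ with $f_\tau=f_\sigma\xi$ and simply compute $\tau=\sigma_k\circ f_\tau=\sigma_k\circ f_\sigma\circ\xi=\sigma\xi$, which exhibits $\sigma$ and $\tau$ as equivalent colorings.

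The only real work is thus concentrated in the first paragraph, in verifying the reconstruction identity and the resulting uniqueness of $f_\sigma$; once these are in place, both implications of the biconditional drop out, and indeed the same element of $\mathrm{Aut}(G)$ serves as the witness in either direction. I expect the main subtlety to be purely bookkeeping --- keeping straight that $f_\sigma$ acts on edges as well as on vertices, and that the composition conventions for $\sigma\alpha$ (precomposition on edges) and for $f_\sigma\alpha$ (composition over the base $K(2k-1,k-1)$) are consistent --- rather than any genuine conceptual obstacle.
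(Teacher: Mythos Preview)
Your proposal is correct and follows essentially the same route as the paper: both define $f_\sigma$ via the derived vertex coloring $\sigma'$ and use the reconstruction of $\sigma$ from $\sigma'$ to pass between equivalence of colorings and equivalence of coverings. Your packaging via the identity $\sigma=\sigma_k\circ f_\sigma$ is slightly more explicit than the paper's (which just invokes ``the way how the original strong $(2k-1)$-edge-coloring can be reconstructed from the derived vertex coloring''), and you also spell out the uniqueness and bijectivity of $\sigma\mapsto f_\sigma$ more carefully than the paper does, but the underlying argument is the same.
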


\begin{proof}
Recall that given a strong $(2k-1)$-edge-coloring $\sigma$ of
an arbitrary graph $G$ we have defined a covering projection
$f$ by sending an arbitrary vertex $v$ of $G$ to the vertex
$\sigma'(v)$, where $\sigma'$ is the derived vertex coloring of
$G$ with colors being the $(k-1)$-element subsets of the
$(2k-1)$-element set. This is the required covering projection
$f_{\sigma}$ corresponding to the coloring~$\sigma$.

Assume that $\sigma$ and $\tau$ are equivalent strong
$(2k-1)$-edge-colorings of $G$, and let $\alpha$ be an
automorphism of $G$ such that $\tau=\sigma\alpha$. It follows
that $\tau'=\sigma'\alpha$ and therefore, by the definition of
the covering projection corresponding to a strong
$(2k-1)$-edge-coloring, $f_{\tau}=f_{\sigma}\alpha$.
Conversely, if covering projections $f_{\sigma}, f_{\tau}\colon
G\to K(2k-1,k-1)$ are equivalent, then there exists an
automorphism $\beta$ of $G$ such that
$f_{\tau}=f_{\sigma}\beta$. The latter can be rewritten as
$\tau'=\sigma'\beta$. However, the way how the original strong
$(2k-1)$-edge-coloring can be reconstructed from the derived
vertex coloring implies that $\tau=\sigma\beta$, which means
that the colorings $\sigma$ and $\tau$ are equivalent.
\end{proof}

What remains to be done is to find a $k$-regular graph $G$ that
admits two non-equivalent covering projections on the Kneser
graph $K(2k-1,k-1)$. We do it for $k=3$, in which case
$K(2k-1,k-1)$ coincides with the Petersen graph. For this
purpose we need to recall several notions pertaining to the
theory of graph covers.

First of all, it will be convenient to regard each edge
(including the loops) as a pair of oppositely oriented
\textit{darts}. Each dart $x$ directed from $u$ to $v$ has its
unique \textit{inverse} $x\inv$ directed from $v$ to $u$. The
set of all darts of a graph $K$ is denoted by $D(K)$. The
\textit{symmetric group} of all permutations of the $d$-element
set $\{1,2,\ldots,d\}$ is denoted by $S_d$; it acts on
$\{1,2,\ldots,d\}$ on the right.

A \textit{permutation voltage assignment} on a graph $K$ is a
mapping $\kappa\colon D(K)\to S_d$ such that
$\kappa(x\inv)=\kappa(x)\inv$ for each dart $x\in D(K)$. For
convenience, we often denote $\kappa(x)$ by $\kappa_x$. Every
permutation voltage assignment $\kappa$ on $K$ gives rise to
the \textit{derived graph} $K^{\kappa}$ for $K$, or the
\textit{lift} of $K$, which is defined as follows. Set
$V(K^{\kappa})=V(K)\times\{1,2,\ldots,d\}$,
$D(K^{\kappa})=D(K)\times\{1,2,\ldots,d\}$, and for each dart
$x=uv$ and $i\in\{1,2,\ldots,d\}$ let the lifted dart $(x,i)$
join $(u,i)$ to $(v,(i)\kappa_x)$. It is easy to see that the
natural projection $p_{\kappa}\colon K^{\kappa}\to K$ which
erases the second coordinate is a covering projection.
Moreover, a classical result of the theory of voltage graphs
states that every $d$-fold covering projection $\tilde K\to K$
is equivalent to the natural projection $K^{\kappa}\to K$ for a
suitable permutation voltage assignment $\kappa$ on $K$ with
values in $S_d$, see \cite[Theorem~2.4.5]{GrossTucker}.

Let us henceforth assume that the base graph $K$ is connected.
Pick a spanning tree $T$ of $K$ and let $r$ be an arbitrary
vertex of $K$, the \textit{root}. For any vertex $w$ of $K$ let
$T(w)$ denote the unique directed path from the root to $w$,
encoded as a sequence of darts, and let $T(w)\inv$ be the
inverse path. Further, for any dart $z=uv$ whose underlying
edge is not contained in $T$ define the permutation
$\theta(z)\in S_d$ by taking the product of voltages (that is,
values of $\kappa$) on the closed walk $T(u)zT(v)\inv$, rooted
at $r$, in the order determined by the walk and starting from
the root. If we set $\kappa'(x)=\theta(x)$ whenever $x$ is a
cotree dart and $\kappa'(x)=\id$ otherwise, we obtain a new
voltage assignment $\kappa'$ on $K$ called the
\textit{$(T,r)$-reduction} of $\kappa$.

The concept of a $(T,r)$-reduction of a voltage assignment is
quite useful. For example, it can be shown that the derived
graph $K^{\kappa}$ is connected if and only if the voltages of
$\kappa'$ generate a transitive subgroup of $S_d$. What is more
important for us, it can be used to determine whether or not
two covering projections are equivalent. The respective result
is taken from \cite[Theorem~2]{Sko}.

\begin{theorem}\label{thm:equivcp}
Let $\kappa$ and $\lambda$ be permutation voltage assignments
on a connected graph $K$, both having their values in the
symmetric group $S_d$, and let $\kappa'$ and $\lambda'$ be
their $(T,r)$-reductions. The natural projections $p_{\kappa}$
and $p_{\lambda}$ are equivalent if and only if there exists an
inner automorphism $\gamma$ of $S_d$ such that
$\lambda'=\gamma\kappa'$.
\end{theorem}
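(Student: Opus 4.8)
The plan is to pass to reduced voltage assignments and then analyze covering equivalences in coordinates. The key point is that a $(T,r)$-reduction alters the voltage assignment but not the equivalence class of the covering, so $\kappa'$ and $\lambda'$ can be used as normal forms for $\kappa$ and $\lambda$; once this is established, comparing the reduced assignments becomes an explicit algebraic matter.

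First I would make the reduction step concrete by exhibiting an explicit equivalence. For each vertex $w$ let $\eta_w\in S_d$ be the product of the voltages $\kappa$ read along the directed tree path $T(w)$ from $r$ to $w$; in particular $\eta_r=\id$, and for a tree dart $x=uv$ one has $\eta_v=\eta_u\kappa_x$. Define $\psi_\kappa\colon K^{\kappa}\to K^{\kappa'}$ by $(v,i)\mapsto(v,(i)\eta_v\inv)$. A direct check shows that $\psi_\kappa$ carries the lifted dart $(x,i)$ to a lifted dart of $K^{\kappa'}$, precisely because $\eta_u\kappa_x\eta_v\inv$ equals $\id$ on tree darts and equals $\theta(x)=\kappa'(x)$ on cotree darts. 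Since $\psi_\kappa$ preserves the first coordinate, it commutes with the projections, so it is an equivalence between $p_\kappa$ and $p_{\kappa'}$; the same construction applied to $\lambda$ gives an equivalence between $p_\lambda$ and $p_{\lambda'}$. Consequently $p_\kappa$ and $p_\lambda$ are equivalent if and only if $p_{\kappa'}$ and $p_{\lambda'}$ are.

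Next I would determine exactly when the reduced projections are equivalent. Any equivalence $\Xi\colon K^{\kappa'}\to K^{\lambda'}$ commutes with the projections onto $K$, hence preserves first coordinates and has the form $(v,i)\mapsto(v,(i)\pi_v)$ for permutations $\pi_v\in S_d$. Requiring $\Xi$ to send darts to darts translates, for each dart $x=uv$, into the identity $\lambda'(x)=\pi_u\inv\kappa'(x)\pi_v$. On tree darts, where both reduced voltages are $\id$, this forces $\pi_u=\pi_v$; since $T$ is a spanning tree of the connected graph $K$, all the $\pi_v$ coincide with a single permutation $g:=\pi_r$. On a cotree dart $z$ the identity then reads $\lambda'(z)=g\inv\kappa'(z)\,g$, and conversely any $g\in S_d$ satisfying these equations yields a well-defined equivalence $\Xi$. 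Thus $p_{\kappa'}$ and $p_{\lambda'}$ are equivalent if and only if there is a $g\in S_d$ with $\lambda'(z)=g\inv\kappa'(z)\,g$ for every cotree dart $z$.

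Finally I would package this as an inner automorphism. Setting $\gamma(s)=g\inv s\,g$ gives an inner automorphism of $S_d$, and since tree darts satisfy $\lambda'(x)=\id=\gamma(\id)=\gamma(\kappa'(x))$ automatically, the cotree identities above are equivalent to the single statement $\lambda'=\gamma\kappa'$; combined with the reduction step this proves the theorem. I expect the main obstacle to be bookkeeping rather than conceptual: one must keep the right-action convention $(i)\kappa_x$ consistent throughout and verify carefully that the coordinate maps $\psi_\kappa$ and $\Xi$ genuinely send every lifted dart to a lifted dart with the correct terminal vertex. The two places where care is essential are the claim that an equivalence must preserve first coordinates and the use of connectivity of $T$ to globalize the local permutations $\pi_v$ into a single $g$; both become routine once the definition of the derived graph is unwound.
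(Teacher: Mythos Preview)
The paper does not actually prove this theorem; it is quoted from \cite[Theorem~2]{Sko} without argument, so there is no in-paper proof to compare against. Your proposal supplies a correct and self-contained proof along the classical lines: reduce to the $(T,r)$-normalized voltage assignments via the explicit fibre relabelling $(v,i)\mapsto(v,(i)\eta_v\inv)$, and then observe that an equivalence between two reduced lifts, being fibre-preserving, is encoded by a single permutation $g\in S_d$ acting simultaneously on all fibres, which forces $\lambda'(z)=g\inv\kappa'(z)\,g$ on cotree darts. Both the verification that $\eta_u\kappa_x\eta_v\inv$ equals $\kappa'(x)$ and the compatibility condition $\lambda'(x)=\pi_u\inv\kappa'(x)\pi_v$ are correct under the paper's right-action convention, and the use of the spanning tree to propagate $\pi_u=\pi_v$ across all vertices is sound because $K$ is assumed connected. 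This is essentially the argument one would find in the cited source, so nothing is missing.
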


Now we are prepared to describe an example of a cubic graph
that covers the Petersen graph and has two non-equivalent
strong $5$-edge-colorings. It is depicted in
Figure~\ref{fig:lift}.

\begin{example}\label{ex:non-equiv}{\rm
Consider the permutation voltage assignments $\kappa$ and
$\lambda$ on the Petersen graph with values in $S_4$ which are
represented in Figure~\ref{fig:pva}; the edges not labelled
carry the trivial voltage $\id$ (in both directions). The
values attached to all the edges are involutions, which means
that they unambiguously represent the respective voltage
assignments.
\begin{figure}[htp!]
	$$
		\includegraphics{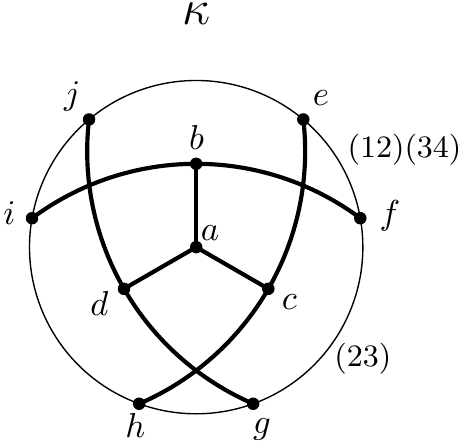} \quad \quad \quad
		\includegraphics{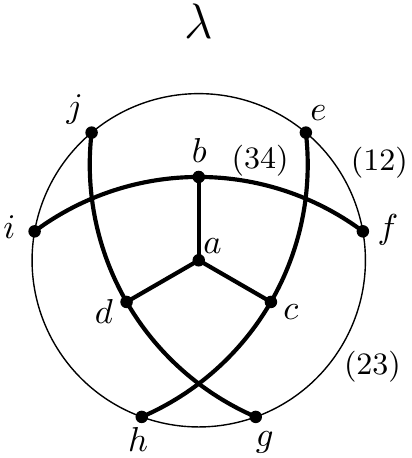}
	$$
\caption{Permutation voltage assignments $\kappa$ and
$\lambda$ on the Petersen graph with values in $S_4$.}
\label{fig:pva}
\end{figure}
The corresponding lifts are isomorphic graphs as can be easily
detected from Figure~\ref{fig:lift}; for sim\-plicity, a vertex
$(v,i)$ is denoted by $v_i$.
\begin{figure}[htp!]
	$$
		\includegraphics{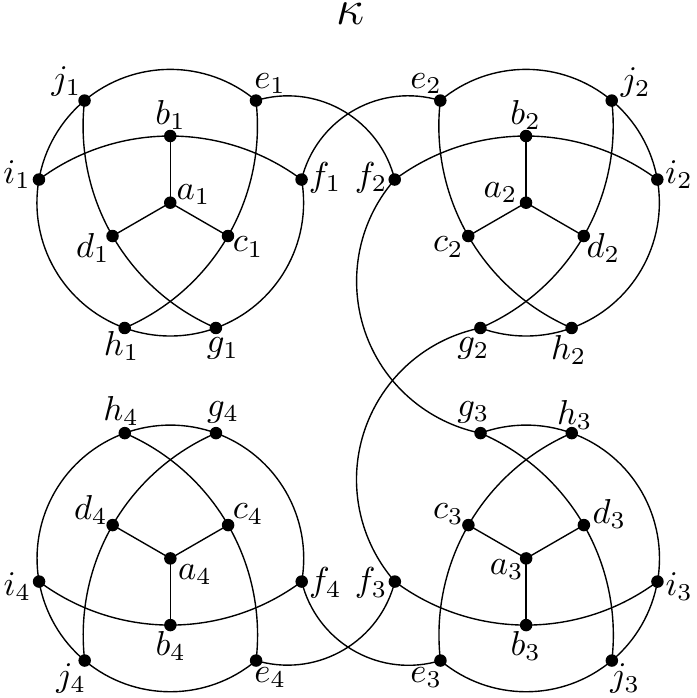} \quad \quad
		\includegraphics{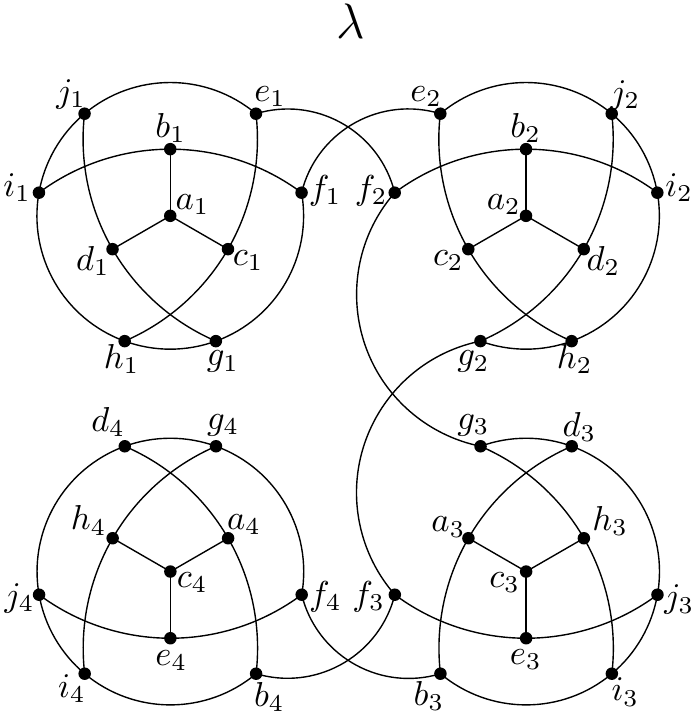}
	$$
\caption{The lifts of the Petersen graph corresponding to the
permutation voltage assignments $\kappa$ and $\lambda$.}
\label{fig:lift}
\end{figure}
We claim that
the natural projections $p_{\kappa}\colon P^{\kappa}\to P$ and
$p_{\lambda}\colon P^{\lambda}\to P$, where $P$ denotes the
Petersen graph, are not equivalent. To see it, pick the
spanning tree $T$ indicated in Figure~\ref{fig:pva} by bold
lines and choose the central vertex $a$ to be the root.
Clearly, the $(T,a)$-reduction $\kappa'$ of $\kappa$ coincides
with $\kappa$. The $(T,a)$-reduction $\lambda'$ of $\lambda$ is
shown in Figure~\ref{fig:reduced}; the value $\lambda'(gf)$ is
not an involution and holds for the indicated direction.
\begin{figure}[htp!]
	$$
		\includegraphics{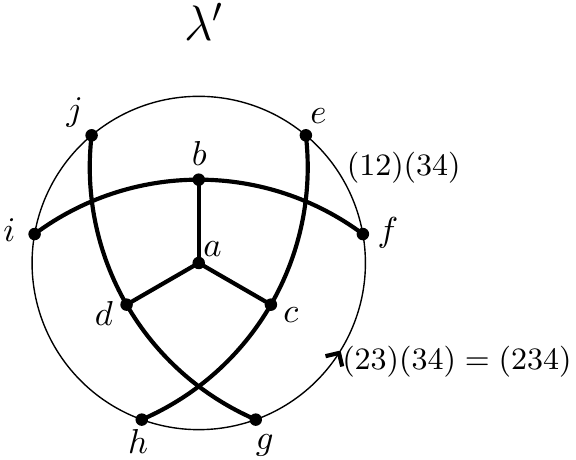}
	$$
\caption{The $(T,a)$-reduction $\lambda'$ of $\lambda$.}
\label{fig:reduced}
\end{figure}
By comparing $\kappa'$ and $\lambda'$ we immediately conclude
that no inner automorphism $\gamma$ of $S_d$ such that
$\lambda'=\gamma\kappa'$ can exist, because every inner
automorphism preserves the cycle structure of permutations.
Theorem~\ref{thm:equivcp} now implies that the covering
projections $p_{\kappa}$ and $p_{\lambda}$ are not equivalent.
Further, from Theorem~\ref{thm:equivcol} we conclude that there
exist strong $5$-edge-colorings $\sigma$ and $\tau$ of the
graph shown in Figure~\ref{fig:lift} such that $f_{\sigma}$ is
equivalent to $p_{\kappa}$ and $f_{\tau}$ is equivalent to
$p_{\lambda}$. They can be constructed simply by lifting the
strong $5$-edge-coloring $\sigma_3$ of the Petersen graph via
$p_{\kappa}$ and $p_{\lambda}$, respectively. By the same
theorem, the colorings $\sigma$ and $\tau$ are not equivalent.
}%
\end{example}

\section{Strong, normal, and Petersen colorings}\label{sec:normal}

Corollary~\ref{cor:main} links strong edge colorings of cubic
graphs to two other interesting types of edge colorings of
cubic graphs -- normal colorings and Petersen colorings -- and
through them to the outstanding Petersen coloring conjecture of
Jaeger \cite{Jae88}.

\begin{conjecture}[Jaeger, 1988]\label{conj:peter}
Every bridgeless cubic graph admits a Petersen coloring.
\end{conjecture}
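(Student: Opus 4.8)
The plan is to translate the existence of a Petersen coloring into the colouring language developed in this paper and then attack the resulting problem. Recall that a Petersen coloring of a cubic graph $G$ assigns to every edge an edge of the Petersen graph $P$ so that the three edges meeting at each vertex of $G$ are sent to three edges meeting at a common vertex of $P$. Composing such an assignment $\phi$ with the canonical strong $5$-edge-coloring $\sigma_3$ of $P$ yields a proper $5$-edge-coloring $\sigma_3\circ\phi$ of $G$ in which every edge is \emph{either} rich (its four neighbours carry four distinct colors, which happens when the two claws at its endpoints are distinct) \emph{or} poor (its four neighbours carry only two colors, when the two claws coincide); colorings with this property are usually called \emph{normal}. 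Thus the first step is to establish this equivalence carefully, so that Conjecture~\ref{conj:peter} becomes the statement that every bridgeless cubic graph admits a normal $5$-edge-coloring.

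With that reformulation in hand, the second step is the standard reduction to snarks. If $G$ is $3$-edge-colorable, then mapping its three colour classes onto the three edges of a single claw of $P$ already produces a Petersen coloring, so only bridgeless cubic graphs that are \emph{not} $3$-edge-colorable need to be treated. One would then reduce to cyclically $4$-edge-connected cubic graphs of girth at least $5$, disposing of small edge cuts and short cycles by gluing partial Petersen colorings across the cut and using that the colorings of $P$ are unique up to automorphism, as established for strong colorings in Section~\ref{sec:main}.

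The third, and genuinely hard, step is to produce the normal coloring on the remaining snarks. Here one would hope to exploit the covering viewpoint of Theorem~\ref{thm:main}: a strong $5$-coloring is exactly a normal coloring in which \emph{no} edge is poor, and by Corollary~\ref{cor:main} these correspond precisely to covers of $P$. The natural idea is therefore to relax the rigid notion of a cover to a looser homomorphism of $G$ into $P$ that is allowed to create poor edges, and to assemble it either by an inductive gluing along a carefully chosen $2$-factor or by a flow/discharging argument tailored to cubic graphs.

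This last step is where the entire programme stalls, and it is the main obstacle: Conjecture~\ref{conj:peter} is Jaeger's Petersen coloring conjecture, one of the central open problems of the area, and it is known to imply both the Berge--Fulkerson conjecture and the cycle double cover conjecture. Consequently any complete execution of the plan above would simultaneously resolve those long-standing problems, so no proof along these lines (or any other) is presently available; the statement is recorded here only to motivate the links between strong, normal, and Petersen colorings drawn in this section.
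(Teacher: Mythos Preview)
The statement you were asked to prove is not a theorem but Jaeger's Petersen coloring conjecture, which the paper records as Conjecture~\ref{conj:peter} without any attempt at proof; it is invoked solely to motivate the discussion of normal and Petersen colorings in Section~\ref{sec:normal}. You recognise this yourself in your final paragraph, correctly noting that the conjecture is open and is known to imply both Berge--Fulkerson and the cycle double cover conjecture. So there is no ``paper's own proof'' to compare against, and your concluding disclaimer is the right assessment.

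That said, the preliminary steps you outline are accurate and align with what the paper states (without proof) as background: the equivalence between Petersen colorings and normal $5$-edge-colorings is exactly Jaeger's result cited from~\cite{Jae85}, and the observation that $3$-edge-colorable graphs are trivially Petersen-colorable (via a single claw) is standard. Your reduction to cyclically $4$-edge-connected snarks is also folklore. But none of this constitutes progress on the conjecture itself, and presenting it under the heading of a ``proof proposal'' is misleading; it would be clearer to label it from the outset as a discussion of known equivalences and reductions, rather than burying the admission of failure at the end.
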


For a cubic graph $G$ a mapping $\xi\colon E(G)\to E(P)$ is
said to be a \textit{Petersen coloring} if any two adjacent
edges of $G$ are mapped to adjacent edges of the Petersen
graph. As a consequence, for every vertex $v$ of $G$ the three
edges incident with $v$ are mapped to three edges incident with
a vertex of $P$ (as $P$ is triangle-free); in particular, $\xi$
is a proper edge coloring. Nevertheless, a Petersen coloring
need not be a homomorphism $G\to P$ because the induced mapping
between the vertex sets need not send adjacent vertices of $G$
to adjacent vertices of $P$ (for example, it can send them to
the same vertex). If $\xi$ does map adjacent vertices to
adjacent ones, then it determines a covering projection $G\to
P$. Conversely, every covering projection $G\to P$ gives rise
to a Petersen coloring of $G$.

In \cite[Section~5]{Jae85} Jaeger proved that a cubic graph
admits a Petersen coloring if and only if it has a
$5$-edge-coloring which he termed `normal'. A proper
$5$-edge-coloring $\phi$ of a cubic graph $G$ is said to be
\textit{normal} if for every edge $e$ of $G$ the number of
colors on the edges adjacent to $e$ is either $2$ or $4$, but
never $3$. In the former case, $e$ is called \textit{poor}, as
opposed to \textit{rich} (introduced in
Section~\ref{sec:main}), which corresponds to the latter case.
Clearly, a normal $5$-edge-coloring with no poor edges is
strong.

By combining these observations with Corollary~\ref{cor:main}
we obtain the following four equivalent statements.

\begin{theorem}\label{thm:equiv}
Let $G$ be cubic graph. Then the following statements are
equivalent:
\begin{itemize}
\item[$(i)$] $\chi_s'(G) = 5$;
\item[$(ii)$] $G$ covers the Petersen graph;
\item[$(iii)$] $G$ admits a normal $5$-edge-coloring in which
    every edge is rich;
\item[$(iv)$] $G$ admits a Petersen coloring which is a
    homomorphism.
\end{itemize}
\end{theorem}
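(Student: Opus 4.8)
The plan is to establish the pairwise equivalences $(i)\Leftrightarrow(ii)$, $(i)\Leftrightarrow(iii)$, and $(ii)\Leftrightarrow(iv)$; together these force all four statements to coincide. The first of these is exactly Corollary~\ref{cor:main} and needs no further argument, so the real work is confined to the other two pairs, each of which I expect to follow directly by unwinding the relevant definitions.

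For $(i)\Leftrightarrow(iii)$ I would argue from the characterization recorded in Section~\ref{sec:main}, namely that a proper edge coloring is strong precisely when every one of its edges is rich. If $\chi_s'(G)=5$, then any strong $5$-edge-coloring has all edges rich; since no edge is poor, the normality condition is satisfied vacuously, so the coloring is a normal $5$-edge-coloring in which every edge is rich. This yields $(i)\Rightarrow(iii)$. Conversely, a normal $5$-edge-coloring with every edge rich is a proper $5$-edge-coloring all of whose edges are rich, hence strong; because $G$ is cubic, every strong edge coloring needs at least $2k-1=5$ colors, so five colors are optimal and $\chi_s'(G)=5$, giving $(iii)\Rightarrow(i)$.

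For $(ii)\Leftrightarrow(iv)$ I would invoke the correspondence between covering projections onto $P$ and Petersen colorings described just before the theorem. If $G$ covers $P$ via a covering projection $f$, then $f$ is in particular a homomorphism and induces, on the edge sets, a Petersen coloring whose induced vertex map is $f$ itself; since $f$ sends adjacent vertices to adjacent vertices, this Petersen coloring is a homomorphism, which gives $(ii)\Rightarrow(iv)$. Conversely, a Petersen coloring that is a homomorphism maps adjacent vertices of $G$ to adjacent vertices of $P$, and by the remark preceding the theorem it then determines a covering projection $G\to P$, giving $(iv)\Rightarrow(ii)$.

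I do not anticipate a genuine obstacle, as all the substantive content has already been deposited in Corollary~\ref{cor:main} and in the preliminary observations about richness and about the covering--Petersen correspondence; the proof is largely bookkeeping. The one point requiring care is the $(iii)$ equivalence, where one must recognize that the hypothesis ``normal with every edge rich'' is merely a restatement of ``strong'' (the normality clause being redundant yet consistent), and then use the $2k-1$ lower bound to promote the mere existence of a strong $5$-edge-coloring to the exact equality $\chi_s'(G)=5$. A secondary subtlety, in $(iv)$, is to keep the edge map of a Petersen coloring carefully distinguished from the homomorphism condition on its induced vertex map, precisely the distinction flagged in the paragraph before the theorem.
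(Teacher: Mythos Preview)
Your proposal is correct and follows essentially the same route as the paper, which offers no formal proof but simply notes that the theorem arises ``by combining these observations with Corollary~\ref{cor:main}.'' Your three pairwise equivalences unpack precisely that combination: $(i)\Leftrightarrow(ii)$ is Corollary~\ref{cor:main}, $(i)\Leftrightarrow(iii)$ comes from the observation that a normal $5$-edge-coloring with no poor edges is strong (together with the $2k-1$ lower bound), and $(ii)\Leftrightarrow(iv)$ is the covering--homomorphism correspondence stated in the paragraph preceding the theorem.
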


\section{Conclusion}\label{sec:conclusion}

Coloring subcubic graphs with five colors without creating a
bichromatic path of length $3$ is very restrictive. For
example, we already need five colors when there are two
adjacent vertices of degree $3$ in a graph. Therefore, the
existence of infinitely many cubic graphs admitting a strong
$5$-edge-coloring is quite surprising.

\begin{figure}[htp!]
	$$
		\includegraphics{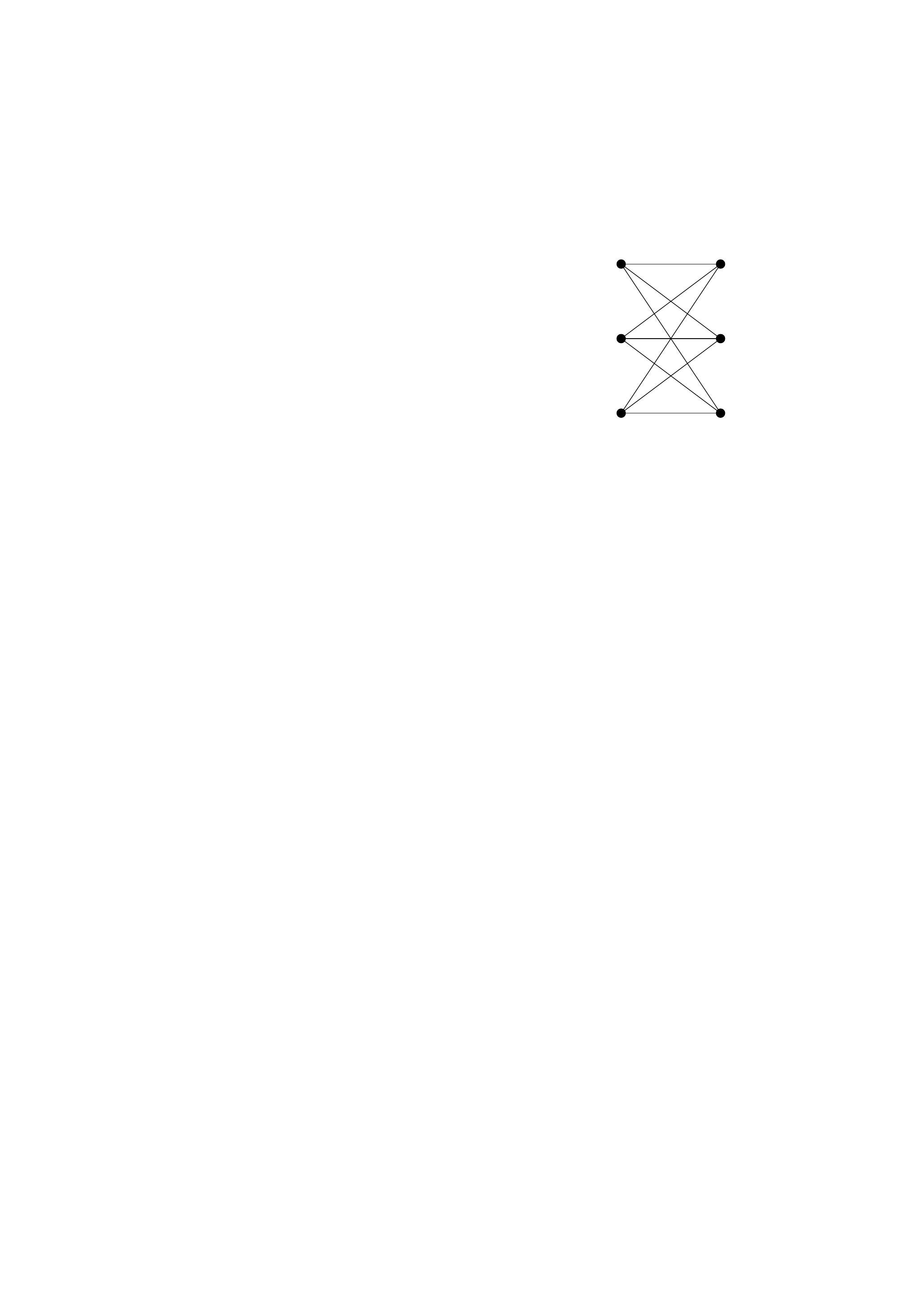} \quad \quad \quad
		\includegraphics{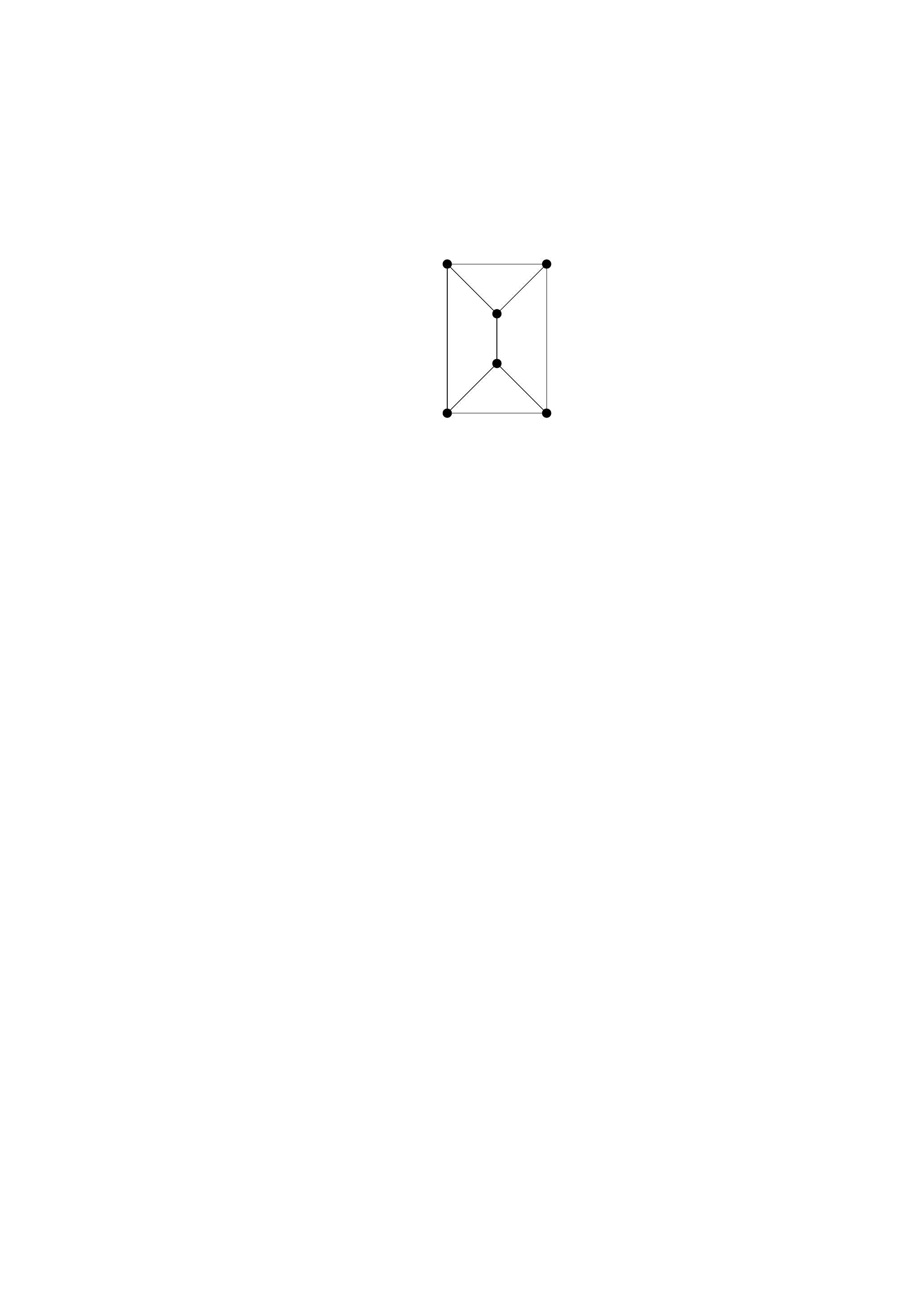} \quad \quad \quad
		\includegraphics{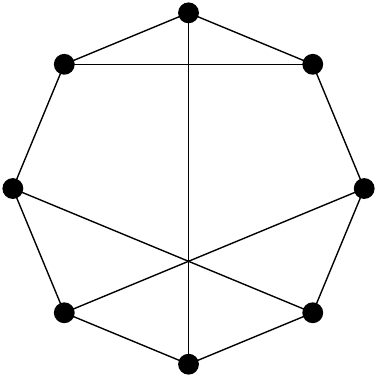}
	$$
	$$
		\includegraphics{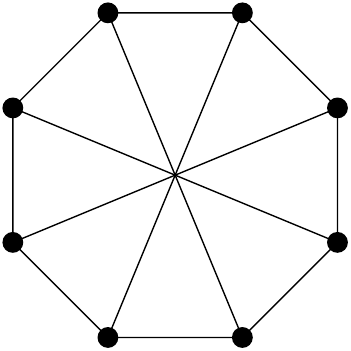} \quad \quad \quad
		\includegraphics{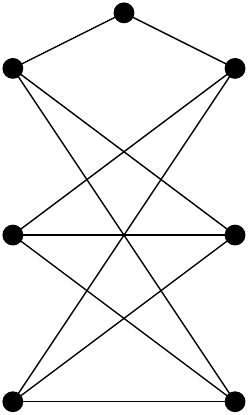}
	$$
\caption{Subcubic graphs with high strong chromatic indices;
the three graphs above need nine colors and the two graphs below
ten colors for any strong edge coloring.}	
\label{fig:9}
\end{figure}

What if we have more available colors? It is already known that
the Heawood graph, a cubic graph on $14$ vertices of girth $6$,
has strong chromatic index $7$. Computational results reveal
that there are two and six cubic graphs of girth at least $6$
on $16$ and $18$ vertices, respectively, for which six colors
are not sufficient. It is even not enough to slightly increase
the girth. The Tutte $8$-cage, a cubic graph of girth $8$ on
$30$ vertices, does not admit any strong $6$-edge-coloring, nor
do five cubic graphs of girth $8$ on 38 vertices. This
encourages us to ask the following.

\begin{question}
Is there a constant $C$ such that every cubic graph of girth at
least $C$ admits a strong $6$-edge-coloring?
\end{question}

Perhaps one can find a characterization of cubic graphs
admitting a strong $6$-edge-coloring similar to the one
presented in this paper for five colors. However, it is likely
that the condition of being bipartite will not play any role in
answering the above question, because Tutte's $8$-cage is
bipartite.

Having seven colors available, it seems that cubic graphs
become strongly colorable as soon as they do not contain short
cycles. Our computational experiments show that all cubic
graphs of girth at least $5$ on at most $26$ vertices and all
bridgeless subcubic graphs of girth at least $5$ on at most
$18$ vertices admit a strong $7$-edge-coloring. We therefore
propose the following conjecture, which strengthens Case
$(5)$ of Conjecture~\ref{conj:main}.

\begin{conjecture}\label{conj:new}
Let $G$ be a subcubic graph of girth at least $5$. Then
	$$
		\chis{G} \le 7\,.
	$$
\end{conjecture}

We remark that without restricting girth eight colors are still
not sufficient to color all bridgeless subcubic graphs (see
Figure~\ref{fig:9} for some examples of graphs that need more
colors). However, as suggested in \cite{HocLajLuz20} on the basis of
computational evidence, it seems that nine colors should
suffice to color all bridgeless subcubic graphs that are not
isomorphic to either $K_{3,3}$ with one subdivided edge or to
the $8$-vertex M\"obius ladder known as the Wagner graph.

\begin{conjecture}[Hocquard, Lajou, Lu\v{z}ar, 2020]
Every bridgeless subcubic graph not isomorphic to the Wagner
graph and the complete bipartite graph $K_{3,3}$ with one edge
subdivided admits a strong $9$-edge-coloring.
\end{conjecture}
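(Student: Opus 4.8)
\medskip
\noindent\textbf{A strategy toward the conjecture.} The plan is to argue by contradiction through a minimal counterexample, anchored by the already established bound $\chis{G}\le 10$ for subcubic graphs (Case~$(1)$ of Conjecture~\ref{conj:main}). Suppose $G$ is a bridgeless subcubic graph, isomorphic neither to the Wagner graph nor to $K_{3,3}$ with one subdivided edge, that satisfies $\chis{G}\ge 10$ and is smallest with respect to the number of edges. Ideally one would like $G$ to be \emph{critical} for the strong chromatic index, meaning $\chis{G}=10$ while $\chis{G-e}\le 9$ for every edge $e$; the subtlety is that deleting an edge can create a bridge and thereby leave the class under consideration, so the reduction must be arranged to preserve bridgelessness, for instance by suppressing whole substructures at once or by admitting the two exceptional graphs as legitimate smaller targets. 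A strong edge coloring is precisely a proper vertex coloring of the square $L(G)^2$ of the line graph, and wherever criticality is available it says that $L(G)^2$ is vertex-critical for chromatic number $10$, so $\delta(L(G)^2)\ge 9$: the corresponding edge of $G$ must conflict with at least $9$ others. Since an edge of a subcubic graph conflicts with at most $12$ others, each such edge sees between $9$ and $12$ competing edges, and this two-sided pinch is the engine of the argument.

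First I would exploit the lower bound around vertices of degree $2$, which exist because bridgelessness forces $\delta(G)\ge 2$. An edge incident to a degree-$2$ vertex conflicts with at most $9$ other edges, so criticality forces this count to be exactly $9$; this is only possible when the configuration around the degree-$2$ vertex is completely ``saturated'' -- both its neighbors have degree $3$, they are non-adjacent, and all of the second-neighborhood edges are distinct. Such rigidity shows that degree-$2$ vertices can occur only in very restricted positions, and in particular rules out two consecutive degree-$2$ vertices as well as degree-$2$ vertices inside short cycles. The same counting, applied to edges joining two degree-$3$ vertices that lie in a triangle or a quadrilateral, forces or forbids specific local patterns and is what should eventually confine $G$ to a bounded-size ``core''.

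The bulk of the work is then a catalogue of \emph{reducible configurations}: small subgraphs $H$ such that removing or suppressing part of $H$ yields a bridgeless subcubic graph $G'$ which is smaller and non-exceptional, hence strongly $9$-colorable by minimality, together with an argument that any strong $9$-coloring of $G'$ extends across $H$. For each configuration the extension ought to be a routine counting step -- one exhibits an uncolored edge whose at most $8$ already-colored conflicting edges leave a free color, possibly after a short Kempe-type recoloring to break a tie. Carrying this out for a sufficiently large family of configurations (consecutive degree-$2$ vertices, low-degree vertices with prescribed neighborhoods, short cycles and their mutual adjacencies) should reduce the irreducible critical graphs to a finite explicit list, which can be examined one by one.

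The hard part -- and the reason the statement is offered as a conjecture rather than a theorem -- is making this finite list collapse to exactly the two named graphs. Reducibility-plus-criticality arguments of this type characteristically terminate at a handful of configurations that resist every local reduction, and the entire content of the conjecture is that, once bridgelessness is imposed, these survivors assemble into the Wagner graph and the subdivided $K_{3,3}$ and into nothing else. I expect that the genuine obstacle is not the individual extension checks, which are mechanical, but the design of a reducible set rich enough to annihilate all other irreducible graphs; the asymptotic machinery behind the bound $1.772\,\Delta(G)^2$ of~\cite{HurJoaKan20} is far too coarse to decide the exact value for $\Delta(G)=3$, so only a tailored combinatorial discharging or critical-graph analysis can close the gap, and the computational evidence cited above suggests that such an analysis, while plausible, has not yet been found.
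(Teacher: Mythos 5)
This statement is one of the paper's open conjectures: the paper offers no proof of it at all, only the computational evidence quoted from \cite{HocLajLuz20} and the authors' own exhaustive search on small graphs. So there is no proof in the paper to compare against, and your submission, as you yourself concede in its final paragraph, is a strategy sketch rather than a proof. The gap is therefore not a single missing step but the entire mathematical content: no reducible configuration is actually shown reducible (every extension argument is deferred as ``routine counting''), no discharging rules are formulated, and the finite list of irreducible graphs---whose collapse to exactly the Wagner graph and the subdivided $K_{3,3}$ \emph{is} the conjecture---is never produced. Until at least one complete reduction lemma is proved, the proposal establishes nothing beyond the known bound $\chis{G}\le 10$.

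Two concrete defects in the framework itself deserve attention before you invest in the configuration catalogue. First, the minimal-counterexample induction is broken by the exceptional graphs in a way your phrase ``admitting the two exceptional graphs as legitimate smaller targets'' does not repair: if a reduction of $G$ yields the Wagner graph or the subdivided $K_{3,3}$, minimality gives you no strong $9$-coloring to extend, since those graphs genuinely require $10$ colors; you would need separate arguments that no critical graph reduces onto them, in addition to the already acknowledged problem that edge deletion or suppression can create bridges. Second, your claim that degree-$2$ vertices ``exist because bridgelessness forces $\delta(G)\ge 2$'' is a non sequitur---bridgelessness only excludes degree-$1$ vertices, and $G$ may well be cubic. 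In the cubic case your ``engine'' compares a lower bound of $9$ conflicting edges from criticality with an upper bound of $12$, a pinch so loose that it constrains essentially nothing; note that the known hard examples (the Wagner graph, the graphs of Figure~\ref{fig:9}) live precisely in this regime. Your local counts themselves are correct (an edge at a degree-$2$ vertex conflicts with at most $9$ others, an edge between degree-$3$ vertices with at most $12$), but they are the easy part, and the asymptotic methods of \cite{HurJoaKan20} are, as you say, irrelevant at $\Delta=3$.
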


In fact, exhaustive computational search indicates that
there are no bridgeless subcubic graphs on more than 12
vertices and strong chromatic index at least $9$ (apart from
the graphs in Figure~\ref{fig:9} and four graphs on 12
vertices). Thus we can strengthen the above conjecture to the
following.

\begin{conjecture}
If $G$ is a bridgeless subcubic graph on at least $13$
vertices, then
	$$
		\chis{G} \le 8\,.
	$$
\end{conjecture}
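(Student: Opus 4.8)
The plan is to attack the conjecture through the method of a minimum counterexample combined with reducibility, in the spirit of the existing proofs of the bound $10$ for subcubic graphs. Suppose, for contradiction, that the statement fails, and among all bridgeless subcubic graphs on at least $13$ vertices with strong chromatic index at least $9$ choose one, $G$, minimizing $|V(G)|+|E(G)|$. Since $G$ is bridgeless it has minimum degree at least $2$, so every vertex has degree $2$ or $3$. It is convenient to reformulate the target inequality $\chis{G}\le 8$ as the statement $\chi(L(G)^2)\le 8$, where $L(G)^2$ is the square of the line graph; a strong edge coloring of $G$ is exactly a proper vertex coloring of $L(G)^2$, and each edge of $G$ then has only a bounded number of already-colored edges within distance $2$ that constrain its color.

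First I would assemble a catalogue of reducible configurations. For each candidate local structure --- two adjacent degree-$2$ vertices, a degree-$2$ vertex lying on a short cycle, a triangle or a $4$-cycle incident with a degree-$2$ vertex, a path of degree-$2$ vertices joining two degree-$3$ vertices, and similar patterns --- the aim is to show that its presence contradicts minimality. The standard template is to delete or suppress the configuration, obtain a smaller bridgeless subcubic graph $G'$, color it with $8$ colors by the minimality of $G$, and then extend the coloring back to $G$. The extension succeeds whenever every re-inserted edge sees at most $7$ distinct colors within distance $2$, so the combinatorics reduces to counting the colors forced on these edges. Here the bridgeless hypothesis is essential: it rules out the pendant and cut configurations that would otherwise obstruct the suppression step or re-create a small exceptional graph.

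The final step is to prove that every bridgeless subcubic graph on at least $13$ vertices must contain one of the catalogued configurations, thereby contradicting the existence of $G$. In the part of $G$ carrying many degree-$2$ vertices, a discharging argument on the degree sequence, in which degree-$3$ vertices donate charge to nearby degree-$2$ vertices, should force a reducible configuration to appear. The genuinely cubic regions, however, admit no degree variation, so there the global contradiction has to come from a direct structural and connectivity analysis --- essentially a refinement of Andersen's argument for the bound $10$ --- showing that a large cubic fragment free of all the reducible configurations cannot be assembled.

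The hardest part will be exactly this cubic case. Unlike the planar setting treated by Kostochka et al., where Euler's formula supplies a global charge deficit, here there is no face structure to exploit, and for a $3$-regular graph the natural degree-based charges are constant and hence useless. One therefore needs a genuinely global, non-discharging argument, and the appearance of four sporadic exceptions on $12$ vertices strongly suggests that the bound $8$ is tight in a way that no finite list of purely local reductions can capture; some configurations will likely have to be handled by an auxiliary argument that tracks color constraints across long cycles rather than within a bounded neighborhood.
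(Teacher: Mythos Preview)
The statement you are addressing is not a theorem in the paper but a \emph{conjecture}: the authors propose it on the basis of exhaustive computational search and explicitly leave it open. There is therefore no proof in the paper to compare your proposal against.

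As for the proposal itself, it is not a proof but a strategy outline, and you are candid about this. The minimum-counterexample-plus-reducibility template is reasonable for the degree-$2$ regime, but you correctly identify that the cubic case has no usable discharging invariant and that the four sporadic $12$-vertex exceptions suggest purely local reductions cannot suffice. Your final paragraph essentially concedes that the decisive step --- a global structural argument replacing discharging for $3$-regular graphs --- is missing. Until that step is supplied, what you have is a plausible plan of attack, not a proof; the conjecture remains open, exactly as the paper presents it.
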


\paragraph{Acknowledgment.}
The first author was partially supported by the Slovenian
Research Agency Program P1--0383 and the project J1--1692. The
second and the third author received partial support from
APVV--19--0308 and VEGA 1/0813/18. The fourth author was
supported by APVV--19--0153.

\end{document}